\documentclass[12pt, reqno]{article}

\usepackage[usenames]{color}
\usepackage{amssymb}
\usepackage{amsmath}
\usepackage{amsthm}
\usepackage{amsfonts}
\usepackage{amscd}
\usepackage{graphicx}

\usepackage[colorlinks=true,
linkcolor=webgreen,
filecolor=webbrown,
citecolor=webgreen]{hyperref}

\definecolor{webgreen}{rgb}{0,.5,0}
\definecolor{webbrown}{rgb}{.6,0,0}

\usepackage{color}
\usepackage{fullpage}
\usepackage{float}

\usepackage{graphics}
\usepackage{latexsym}
\usepackage{epsf}
\usepackage{breakurl}

\setlength{\textwidth}{6.5in}
\setlength{\oddsidemargin}{.1in}
\setlength{\evensidemargin}{.1in}
\setlength{\topmargin}{-.1in}
\setlength{\textheight}{8.4in}

\usepackage{mathtools}

\usepackage{subcaption}
\usepackage[capitalize,nameinlink,noabbrev,nosort]{cleveref}

\usepackage[T1,T5]{fontenc} 

\theoremstyle{plain}
\newtheorem{thm}{Theorem}

\newtheorem{lemma}[thm]{Lemma}
\newtheorem{prop}[thm]{Proposition}

\theoremstyle{definition}
\newtheorem{definition}[thm]{Definition}
\newtheorem{example}[thm]{Example}

\theoremstyle{remark}
\newtheorem{rmk}[thm]{Remark}
\newtheorem*{notation}{Notation}

\newcommand{\R}{\mathbb{R}}
\newcommand{\Z}{\mathbb{Z}}
\newcommand{\Zn}{\Z/n\Z}
\newcommand{\Zx}{(\Zn)^\times}
\DeclareMathOperator{\Aut}{Aut}
\DeclareMathOperator{\Conj}{Conj}
\DeclareMathOperator{\id}{id}
\DeclareMathOperator{\PSL}{PSL}
\DeclareMathOperator{\PGL}{PGL}
\newcommand{\perm}{_{\mathrm{perm}}}
\newcommand{\surj}{\twoheadrightarrow}
\newcommand{\inv}{^{-1}}
\renewcommand{\phi}{\varphi}

\newcommand{\seqnum}[1]{\href{https://oeis.org/#1}{\underline{#1}}}

\begin{document}

\begin{center}
\vskip 1cm{\LARGE\bf 
Enumeration of Virtual Quandles\\
\vskip .1in 
up to Isomorphism
}
\vskip 1cm
\large
L\d\uhorn c Ta\\
Department of Mathematics\\
University of Pittsburgh\\
Pittsburgh, PA 15260\\
USA\\
\href{mailto:ldt37@pitt.edu}{\tt ldt37@pitt.edu}
\end{center}

\vskip .2 in

\begin{abstract}
Virtual racks and virtual quandles are nonassociative algebraic structures based on the Reidemeister moves of virtual knots. In this note, we enumerate virtual dihedral quandles and several families of virtual permutation racks and virtual conjugation quandles up to isomorphism. We also classify virtual racks and virtual quandles up to order $8$ using a computer search. These classifications are based on the conjugacy class structures of rack automorphism groups. In particular, we compute class numbers of holomorphs of finite cyclic groups, which may be of independent interest.
\end{abstract}

\section{Introduction}

In this note, we enumerate virtual dihedral quandles and other families of virtual racks up to isomorphism. These classifications are based on the conjugacy class structures of the automorphism groups of the underlying racks. Using a computer search, we also classify virtual racks and virtual quandles up to order $8$. We provide our code and data in a GitHub repository~\cite{code}. 

\subsection{Motivations}

In 1992, Fenn and Rourke~\cite{fenn} introduced nonassociative algebraic structures called \emph{racks} to construct complete invariants of irreducible framed links in connected $3$-manifolds. Racks generalize algebraic structures called \emph{quandles}, which Joyce~\cite{joyce} and Matveev~\cite{matveev} independently introduced in 1982 to construct complete invariants of links in $\R^3$ and $S^3$, and \emph{kei}, which Takasaki~\cite{takasaki} introduced in 1943 to study Riemannian symmetric spaces. Racks, quandles, and kei have enjoyed significant study as knot invariants in geometric topology and in their own rights in quantum algebra and group theory. We refer the reader to a survey of Nelson~\cite{intro} and the book of Elhamdadi and Nelson~\cite{quandlebook} for accessible introductions to quandle theory, the book of Nosaka~\cite{book} for a reference on racks as they concern low-dimensional topology, and an article of Elhamdadi~\cite{survey} for a survey of the algebraic state of the art.

In 2002, Manturov~\cite{manturov} introduced \emph{virtual racks}, which are racks equipped with a distinguished automorphism. Virtual racks generalize \emph{virtual quandles}, which define invariants of virtual links in thickened surfaces~\cite{biquandle,manturov}, links in certain lens spaces~\cite{lens}, and long virtual links~\cite{long}. Virtual racks also generalize \emph{GL-racks}, which can distinguish Legendrian links in $\R^3$ or $S^3$ with the standard contact structure~\cite{karmakar,bi,ta}, including Legendrian knots not distinguishable by their classical or homological invariants~\cite{ta2}. More abstractly, virtual racks model braided objects in symmetric categories and actions of virtual braid groups~\cite{lebed}. Recent papers have further generalized virtual racks to construct invariants of multi-virtual links, which generalize virtual links~\cite{multi,multi2}.

Finite virtual quandles are particularly important because they define counting invariants and cohomological invariants of virtual links~\cite{cocycle,biquandle,manturov}. In particular, various authors have used \emph{virtual dihedral quandles} to study the Kauffman--Harary conjecture and Fox $n$-coloring invariants of virtual links~\cite{fox2,farinati,fox}. The classification of finite virtual quandles is therefore a natural question that extends the classification problems for finite quandles~\cite{finite,vy} and finite GL-racks~\cite[Sec.\ 3]{karmakar}.

\subsection{Structure of the paper}

In Section~\ref{sec:prelims}, we give definitions and examples of virtual racks and virtual quandles.

In Section~\ref{sec:obs}, we record a key observation relating virtual rack isomorphism classes to conjugacy classes of rack automorphism groups. As an application, we classify virtual racks and virtual quandles up to order $8$. As another application, we introduce an integer-valued rack invariant $v(R)$. We compute $v(R)$ for certain permutation racks and conjugation quandles.

In Section~\ref{sec:main}, we classify virtual dihedral quandles up to isomorphism. In particular, we obtain the following enumeration.

\begin{thm}\label{thm:main}
    Let $n\in \Z^+$ be a positive integer, let $R_n$ be the dihedral quandle of order $n$, and let $\phi$ denote Euler's totient function. Then the number of isomorphism classes of virtual quandles whose underlying quandles are isomorphic to $R_n$ is
    \begin{equation}\label{eq:seq}
        v(R_n)=\phi(n)\sum_{d\mid n}\frac{1}{\phi(d)}.
    \end{equation} 
\end{thm}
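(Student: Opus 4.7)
The plan is to invoke the observation from Section~\ref{sec:obs}, which identifies $v(R_n)$ with the number of conjugacy classes of $\Aut(R_n)$, and then to compute this class number directly. First I would recall the classical identification of $\Aut(R_n)$ with the affine group, i.e., the holomorph of $\Zn$, realized as the semidirect product $\Zn \rtimes \Zx$ whose element $(b,a)$ acts on $\Zn$ by $x \mapsto ax + b$.

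Next I would work out conjugation explicitly. Using the group law $(b_1,a_1)(b_2,a_2) = (b_1 + a_1 b_2,\, a_1 a_2)$, a short calculation shows that conjugation by $(c,d)$ sends $(b,a)$ to $((1-a)c + db,\, a)$. Thus $a$ is a conjugacy invariant and the classes are fibered over $\Zx$. For fixed $a$, the $b$-values in the conjugacy class of $(b_0,a)$ form the orbit of $b_0$ under simultaneous translation by the subgroup $H_a := (1-a)\Zn$ and multiplication by $\Zx$. Equivalently, this is an orbit in the cyclic quotient $\Zn / H_a$, which has order $k := \gcd(n, 1-a)$, under the induced multiplicative action of $\Zx$.

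Since the reduction map $\Zx \surj (\Z/k\Z)^\times$ is surjective whenever $k \mid n$, the $\Zx$-action descends to the usual multiplicative action of $(\Z/k\Z)^\times$ on $\Z/k\Z$. The orbits of the latter are classified by the value of $\gcd(\cdot, k)$, so there are exactly $\tau(k)$ of them, where $\tau$ denotes the number-of-divisors function. Summing over $a \in \Zx$ then yields
\begin{equation*}
    v(R_n) \;=\; \sum_{a \in \Zx} \tau\!\left(\gcd(n,\, 1-a)\right).
\end{equation*}
A divisor swap using $\tau(m) = \sum_{d \mid m} 1$ converts the right-hand side to $\sum_{d \mid n} \#\{a \in \Zx : a \equiv 1 \pmod d\}$, and since the reduction $\Zx \surj (\Z/d\Z)^\times$ has kernel of size $\phi(n)/\phi(d)$, this equals $\phi(n) \sum_{d \mid n} 1/\phi(d)$, as desired.

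The main obstacle I anticipate is the orbit analysis in the middle step: correctly recognizing the combined action (translation by $H_a$ together with multiplication by $\Zx$) as a single multiplicative action on the quotient $\Zn / H_a$, and then confirming that the reduction $\Zx \to (\Z/k\Z)^\times$ is surjective so that orbit counting collapses cleanly to $\tau(k)$. The rest is a routine double-counting maneuver, and the edge cases $n\in\{1,2\}$ can be checked by hand against the formula.
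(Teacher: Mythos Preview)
Your proposal is correct and follows essentially the same route as the paper: reduce to counting conjugacy classes in the holomorph $\Zn\rtimes\Zx$, compute conjugation to see that the unit coordinate is invariant and the additive coordinate varies in an $A^\times$-orbit on $\Zn/(1-a)\Zn\cong\Z/k\Z$ with $k=\gcd(n,1-a)$, use surjectivity of $\Zx\surj(\Z/k\Z)^\times$ to get $\tau(k)$ orbits, and then swap the order of summation to obtain $\sum_{d\mid n}\phi(n)/\phi(d)$. The paper organizes exactly these steps into Proposition~\ref{prop:holomorph} and Theorem~\ref{thm:main2}.
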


\begin{rmk}
    The integer sequence $(v(R_n))_{n\geq 1}$ is OEIS sequence \seqnum{A069208}~\cite{OEIS}. Our proof of Theorem~\ref{thm:main} shows that $(v(R_n))_{n\geq 1}$ is also the sequence of class numbers of the holomorphs ${\Zn\rtimes\Zx}$ of finite cyclic groups; see Theorem \ref{thm:main2}. 
\end{rmk}
    
\begin{notation}
    We use the following notation throughout this paper. 
    
    We denote the composition of functions $\chi:X\to Y$ and $\psi:Y\to Z$ by $\psi\chi$.
    
    Given a set $X$, let $S_X$ be the symmetric group of $X$. 
    
    Given a group $G$, let $Z(G)$ denote the center of $G$. Given an element $g\in G$, let $C_{S_X}(g)$ denote the centralizer of $g$ in $G$.

    For all positive integers $n\in\Z^+$, let $\Zn$ denote the cyclic group of order $n$, and let $\Zx$ denote its multiplicative group of units.
\end{notation}

\section{Preliminaries}\label{sec:prelims}
\subsection{Racks}

	\begin{definition}
		Let $X$ be a set, let $s:X\to S_X$ be a map, and write $s_x:=s(x)$ for all elements $x\in X$. We call the pair $R:=(X,s)$ a \emph{rack} if \begin{equation*}
		s_xs_y=s_{s_x(y)}s_x
	\end{equation*}
        for all $x,y\in X$.  
        If in addition $s_x(x)=x$ for all $x\in X$, then we say that $R$ is a \emph{quandle}.
		The \emph{order} of $R$ is the cardinality of $X$.
	\end{definition}

\begin{rmk}
    Some authors define racks and quandles as sets $X$ with a right-distributive nonassociative binary operation $\triangleright:X\times X\to X$ satisfying certain axioms. These two definitions are equivalent \cite{lens,survey} via the formula
    \[
    s_y(x)=x\triangleright y.
    \]
\end{rmk}

	\begin{example}\cite[Ex.\ 99]{quandlebook}
		Let $X$ be a set, and fix a permutation $\sigma\in S_X$. Define $s:X\rightarrow S_X$ by $x\mapsto \sigma$, so that $s_x(y)=\sigma(y)$ for all $x,y\in X$. Then $(X,\sigma)\perm:=(X,s)$ is a rack called a \emph{permutation rack}, \emph{cyclic rack}, or \emph{constant action rack}.
        
        Note that $(X,\sigma)\perm$ is a quandle if and only if $\sigma$ is the identity map $\id_X$. In this case, we call $(X,\id_X)\perm$ a \emph{trivial quandle}.
	\end{example}

    	\begin{example}\cite[Ex.\ 2.13]{book}
	Let $X$ be a union of conjugacy classes in a group $G$, and define $c:X\rightarrow S_X$ by sending any element $x\in X$ to the conjugation map \[c_x:=[y\mapsto xyx\inv].\]
		Then $\Conj X:=(X,c)$ is a quandle called a \emph{conjugation quandle} or \emph{conjugacy quandle}. 
        Note that $\Conj G$ is a trivial quandle if and only if $G$ is abelian.
\end{example}

        \begin{example}\cite[Ex.\ 54]{quandlebook}
		Let $G$ be an abelian additive group. Define $s:G\to S_G$ by setting \[s_g(h):=2h-g\] for all elements $g,h\in G$. Then $T(G):=(G,s)$ is a quandle called a \emph{Takasaki kei}. 
        
        In particular, if $n\in\Z^+$ and $G=\Zn$, then we call $T(\Zn)$ the \emph{dihedral quandle} of order $n$. We denote $T(\Zn)$ by $R_n$.
	\end{example}

\subsection{Virtual racks}
Virtual racks are racks equipped with a rack automorphism.
    
    	\begin{definition}
		Given two racks $(X,s)$ and $(Y,t)$, we say that a map $\psi:X\to Y$ is a \emph{rack homomorphism} if \[\psi s_x=t_{\psi(x)}\psi\] for all $x\in X$. A \emph{rack isomorphism} is a bijective rack homomorphism. \emph{Automorphisms} of racks are isomorphisms from a rack to itself. 
        We denote the \emph{automorphism group} of a rack $R$ by $\Aut R$. 
	\end{definition}

    \begin{definition}
    A \emph{virtual rack} is a pair $(R,f)$ where $R$ is a rack and $f\in\Aut R$ is a rack automorphism. We say that $f$ is a \emph{virtual structure} on $R$.  
    If in addition $R$ is a quandle, then we call $(R,f)$ a \emph{virtual quandle}. We say that $R$ is the \emph{underlying rack} (resp.\ \emph{underlying quandle}) of $(R,f)$.
\end{definition}

\begin{example}\label{ex:perm} 
    Let $P=(X,\sigma)\perm$ be a permutation rack. Evidently, $\Aut P=C_{S_X}(\sigma)$, so a virtual structure on $P$ is precisely a permutation of $X$ that commutes with $\sigma$.
\end{example}

\begin{example}
    Let $G$ be a group. We consider several virtual structures on $\Conj G$. Fix an element $g\in G$. 
    \begin{itemize}
        \item Consider the left-multiplication map $m_g:=[h\mapsto gh]$. Then $(\Conj G,m_g)$ is a virtual quandle if and only if $g\in Z(G)$~\cite[Ex.\ 3.6]{bi}.
        \item Consider the conjugation map $c_g:=[h\mapsto ghg\inv]$. Then $(\Conj G,c_g)$ is a virtual quandle~\cite[Lemma 1]{manturov}. More generally, given any rack $R=(X,s)$ and any element $x\in X$, the pair $(R,s_x)$ is a virtual rack.
    \end{itemize}
\end{example}

\begin{example}\label{ex:dihedral}
    Let $n\in\Z^+$ be a positive integer, and let $R_n$ be the dihedral quandle of order $n$. A result of Elhamdadi, Macquarrie, and Restrepo~\cite[Thm.\ 2.3]{aut} states that $\Aut R_n$ is the group of affine transformations of $\Zn$, which is isomorphic to the holomorph ${\Zn\rtimes\Zx}$ of $\Zn$. Hence, virtual structures on $R_n$ are precisely affine transformations
    \[
    T_{a,u}:=[x\mapsto a+ux]
    \]
    with $a\in \Zn$ and $u\in \Zx$.
\end{example}

\begin{definition}\label{def:auto}
    Let $V_1=(R_1,f_1)$ and $V_2=(R_2,f_2)$ be virtual racks. A \emph{virtual rack homomorphism} from $V_1$ to $V_2$ is a rack homomorphism $\psi$ from $R_1$ to $R_2$ that intertwines the virtual structures: 
        \[\psi f_1=f_2\psi.\] 
    \emph{Virtual rack isomorphisms} are bijective virtual rack homomorphisms.
\end{definition}

\section{Enumeration results}\label{sec:obs}

\subsection{Key observation}

The following is immediately evident from Definition \ref{def:auto}.

\begin{prop}\label{prop:key}
    Let $f_1,f_2$ be two virtual structures on a rack $R$. Then $(R,f_1)\cong(R,f_2)$ if and only if $f_1$ and $f_2$ are conjugate in $\Aut R$.
\end{prop}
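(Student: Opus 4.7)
The plan is to unpack Definition~\ref{def:auto} in the special case where the source and target virtual racks share the same underlying rack $R$. In that situation, a virtual rack isomorphism from $(R,f_1)$ to $(R,f_2)$ is precisely a bijective rack homomorphism $\psi\colon R\to R$ — equivalently, an element $\psi\in\Aut R$ — satisfying the intertwining relation $\psi f_1=f_2\psi$.

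The forward direction then follows by rearranging this relation: if such a $\psi\in\Aut R$ exists, then $f_2=\psi f_1\psi\inv$, which is exactly the statement that $f_1$ and $f_2$ are conjugate in $\Aut R$. Conversely, if $f_2=\psi f_1\psi\inv$ for some $\psi\in\Aut R$, then multiplying on the right by $\psi$ recovers $\psi f_1=f_2\psi$, so $\psi$ intertwines $f_1$ with $f_2$ and hence furnishes a virtual rack isomorphism $(R,f_1)\to(R,f_2)$.

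There is no genuine technical obstacle; the only thing to be careful about is reading Definition~\ref{def:auto} symmetrically. The intertwining condition $\psi f_1=f_2\psi$ is literally the statement that conjugation by $\psi$ inside $S_R$ carries $f_1$ to $f_2$, and this conjugation takes place inside $\Aut R$ precisely because $\psi$ is required to be a rack automorphism. Thus the biconditional is immediate, and no further machinery is needed.
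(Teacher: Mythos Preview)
Your proposal is correct and matches the paper's approach exactly: the paper simply declares the proposition ``immediately evident from Definition~\ref{def:auto},'' and what you have written is precisely the unpacking of that definition into the conjugacy condition. There is nothing to add or correct.
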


In other words, classifying virtual structures on a given rack up to isomorphism is equivalent to characterizing the conjugacy classes of the rack's automorphism group. 

\subsection{Applications of Proposition \ref{prop:key}}

\subsubsection{A library of virtual racks}

Using the computer algebra system \texttt{GAP} \cite{GAP4}, we implemented an algorithm that exhaustively searches for conjugacy relations in automorphism groups of finite racks. By Proposition \ref{prop:key}, this search classifies finite virtual racks and virtual quandles up to isomorphism. Our implementation uses Vojt\v echovsk\'y and Yang's library of racks and quandles~\cite{vy} in a way similar to the author's previous classification of GL-racks up to order $8$~\cite[Appendix A]{ta2}. 

We were able to complete this search for virtual racks and virtual quandles up to order~$8$. We provide our \texttt{GAP} code and tabulate our data in a GitHub repository \cite{code}. We also enumerate our data in OEIS sequences \seqnum{A385040} and \seqnum{A385041}~\cite{OEIS}.

\subsubsection{Class numbers as rack invariants}

\begin{notation}
We use the following notation for the remainder of this paper.

For all racks $R$, let $v(R)$ denote the number of isomorphism classes of virtual racks whose underlying rack is isomorphic to $R$. 

    Given a group $G$, let $k(G)$ be the class number of $G$, that is, the number of conjugacy classes of $G$. 
\end{notation}

Proposition \ref{prop:key} implies that for all racks $R$,
\begin{equation}\label{eq:count}
    v(R)=k(\Aut R).
\end{equation}
In particular, $v(R)$ is a rack invariant. 
The rest of this note is dedicated to using Equation~\eqref{eq:count} to compute $v(R)$ for various families of racks.

\begin{notation}
    For all nonnegative integers $n\geq 0$, let $p(n)$ denote the number of partitions of~$n$~\cite[\seqnum{A000041}]{OEIS}.
\end{notation}

\begin{example}\label{ex:perm2}
    Let $P=(X,\sigma)\perm$ be a permutation rack. By Example~\ref{ex:perm} and Equation~\eqref{eq:count}, \[v(P)=k(\Aut P)=k(C_{S_X}(\sigma)).\] 
    
    For example, let $X$ be a finite set of cardinality $n\in\Z^+$. If $P$ is a trivial quandle, then $\Aut P=S_X$, so $v(P)=p(n)$. If, instead, $\sigma\in S_n$ is an $n$-cycle, then $C_{S_X}(\sigma)=\langle\sigma\rangle \cong \Zn$, so $v(P)=n$.
\end{example}

\begin{rmk}
    Example~\ref{ex:perm2} coincides with the classification of GL-racks whose underlying racks are permutation racks~\cite[Subsec.\ 4.2]{ta2}.
\end{rmk}

\subsubsection{Virtual conjugation quandles}

The following result of Bardakov, Nasybullov, and Singh~\cite[Cor.\ 2]{centerless} gives a simple way to compute $v(\Conj G)$ when $G$ is a centerless group.

\begin{lemma}\label{lemma:centerless}
    Let $G$ be a group, and let $\Aut_{\mathsf{Grp}}(G)$ denote its automorphism group. Then $Z(G)=1$ if and only if \[\Aut_{\mathsf{Grp}}(G)=\Aut (\Conj G).\] 
\end{lemma}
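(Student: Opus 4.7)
The plan is to prove each direction of the biconditional separately. The forward direction will rest on identifying $G$ with its inner automorphism group $\operatorname{Inn}(G):=\{c_x:x\in G\}\subseteq S_G$ when $Z(G)=1$; the reverse direction reduces to producing a single counterexample.

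For the implication $Z(G)=1 \Rightarrow \Aut_{\mathsf{Grp}}(G)=\Aut(\Conj G)$, I would first observe that the inclusion $\Aut_{\mathsf{Grp}}(G)\subseteq\Aut(\Conj G)$ holds for any group $G$, since any group automorphism of $G$ preserves conjugation. To establish the reverse inclusion, take $\psi\in\Aut(\Conj G)$ and rewrite its defining axiom $\psi c_x=c_{\psi(x)}\psi$ as $\psi c_x\psi\inv=c_{\psi(x)}$ inside $S_G$. This exhibits conjugation by $\psi$ as stabilizing $\operatorname{Inn}(G)$ and hence restricting to a group automorphism $\Phi$ of $\operatorname{Inn}(G)$ satisfying $\Phi(c_x)=c_{\psi(x)}$. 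Since $Z(G)=1$, the canonical map $\iota:G\to\operatorname{Inn}(G)$, $x\mapsto c_x$, is a group isomorphism, so the composite $\tilde\psi:=\iota\inv\circ\Phi\circ\iota$ is a group automorphism of $G$. Chasing definitions gives $\tilde\psi(x)=\iota\inv(c_{\psi(x)})=\psi(x)$ for every $x\in G$, so $\psi$ itself is a group automorphism.

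For the converse, I would argue the contrapositive. Given any nontrivial $z\in Z(G)$, the earlier example in the paper shows that left multiplication $m_z=[h\mapsto zh]$ is a quandle automorphism of $\Conj G$ precisely because $z$ is central. But $m_z(e)=z\neq e$, so $m_z$ cannot be a group homomorphism. Hence $m_z\in\Aut(\Conj G)\setminus\Aut_{\mathsf{Grp}}(G)$, so the two automorphism groups differ.

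I do not anticipate any serious obstacle. The only delicate point is verifying that the transported automorphism $\tilde\psi$ genuinely coincides on the nose with the set map $\psi$; this crucially uses injectivity of $\iota$, which is exactly the centerless hypothesis, so that the equality $\Phi(c_x)=c_{\psi(x)}$ uniquely identifies $\psi(x)$ as its $\iota\inv$-preimage.
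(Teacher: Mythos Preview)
Your proof is correct. Note, however, that the paper does not actually supply its own proof of this lemma: it is quoted as a result of Bardakov, Nasybullov, and Singh~\cite[Cor.\ 2]{centerless} and used as a black box. Your argument is a clean self-contained proof along the expected lines---transporting the quandle automorphism through the isomorphism $G\cong\operatorname{Inn}(G)$ for the forward direction, and exhibiting a central left-translation as a non-group quandle automorphism for the converse---and nothing in it is problematic.
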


\begin{example}\label{ex:sn}
    Let $n\in\Z^+$ be a positive integer, and let $S_n$ denote the symmetric group on $n$ letters. We compute $v(\Conj S_n)$ as follows. Since $\Conj S_2$ is a trivial quandle, Example~\ref{ex:perm2} yields $v(\Conj S_2)=p(2)=2$. 
    If $n\neq 2$, then $Z(S_n)=1$, so Lemma~\ref{lemma:centerless} implies that
    \[
    \Aut (\Conj S_n)=\begin{cases}
        S_n, &\text{if $n\neq 6$;}\\
        S_6\rtimes \Z/2\Z,&\text{if $n=6$.}
    \end{cases}
    \]
    Since $k(S_n)=p(n)$ for all $n\in\Z^+$ and $k(S_6\rtimes\Z/2\Z)=13$, Equation~\eqref{eq:count} and the above discussion yield \[v(\Conj S_n)=\begin{cases}
        p(n),&\text{if $n\neq 6$;}\\
        13,&\text{if $n=6$.}
    \end{cases}\]
\end{example}

\begin{example}
    Let $n\in\Z^+$ be a positive integer, and let $A_n$ denote the alternating group on $n$ letters. An argument similar to that of Example~\ref{ex:sn} yields
    \[
    v(\Conj A_n)=\begin{cases}
        p(n),&\text{if $n\neq 2,6$;}\\
        1, & \text{if $n=2$,}\\
        13,&\text{if $n=6$.}
        \end{cases}
    \]
\end{example}

\begin{example}
    For all odd primes $p\geq 3$, the projective special linear group $\PSL(2,p)$ is centerless, and its automorphism group is isomorphic to the projective linear group $\PGL(2,p)$. Hence, Lemma~\ref{lemma:centerless} and Equation~\eqref{eq:count} yield
    \[
    v(\Conj \PSL(2,p)) =k(\PGL(2,p))=(p-1)(p+1).
    \]
    See OEIS sequence \seqnum{A084920}~\cite{OEIS}.
\end{example}

\section{Virtual dihedral quandles}\label{sec:main}
In this section, we classify virtual dihedral quandles up to isomorphism. In particular, we prove Theorem~\ref{thm:main}.

\begin{notation}    
    For the remainder of the paper, let $n\in\Z^+$ be a positive integer, let $A:=\Z/n\Z$, and let $G:=A\rtimes A^\times$ be the holomorph of $A$. 
\end{notation}

\subsection{Classification}

First, we recall Example~\ref{ex:dihedral}: Virtual structures on the dihedral quandle $R_n$ of order $n$ are precisely the affine transformations $T_{a,u}:A\to A$ defined by
\[
x\mapsto a+ux
\]
with $a\in A$ and $u\in A^\times$. Indeed, the mapping $T_{a,u}\mapsto (a,u)$ defines a group isomorphism from $\Aut R_n$ to $G$~\cite[Thm.\ 2.3]{aut}.

In the following, we classify virtual dihedral quandles up to isomorphism.

\begin{prop}\label{prop:dihedral}
    Let $n\in\Z^+$ be a positive integer, and let $T_{a,u},T_{b,v}$ be virtual structures on the dihedral quandle $R_n$ of order $n$. Then $(R_n,T_{a,u})\cong (R_n,T_{b,v})$ if and only if $u=v$ and
    \[
    b=ya+(1-u)x
    \]
    for some element $(x,y)\in G$.
\end{prop}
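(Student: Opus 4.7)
The plan is to apply Proposition~\ref{prop:key} and translate the question into a computation of conjugacy in the holomorph $G=A\rtimes A^\times$. Recall from Example~\ref{ex:dihedral} that $T_{a,u}\mapsto(a,u)$ is a group isomorphism $\Aut R_n\xrightarrow{\sim} G$, so by Proposition~\ref{prop:key}, $(R_n,T_{a,u})\cong(R_n,T_{b,v})$ if and only if $(a,u)$ and $(b,v)$ are conjugate in $G$. Thus the entire proposition reduces to writing down the explicit form of conjugation in the semidirect product.

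The key step is a direct calculation. Using the multiplication rule $(x,y)(a,u)=(x+ya,yu)$ and the inverse $(x,y)^{-1}=(-y^{-1}x,\,y^{-1})$ in $G$, I would compute
\[
(x,y)(a,u)(x,y)^{-1}=(x+ya,\,yu)(-y^{-1}x,\,y^{-1})=\bigl(x+ya-ux,\;yuy^{-1}\bigr).
\]
Since $A^\times$ is abelian, $yuy^{-1}=u$, and the right-hand side simplifies to $\bigl(ya+(1-u)x,\,u\bigr)$. So the conjugate of $(a,u)$ by an arbitrary element $(x,y)\in G$ fixes the second coordinate at $u$ and transforms the first coordinate into $ya+(1-u)x$.

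From this formula the proposition is immediate: $(a,u)$ and $(b,v)$ are conjugate in $G$ if and only if there exists $(x,y)\in G$ with $v=u$ and $b=ya+(1-u)x$, which is exactly the stated condition. There is no real obstacle here — the only thing to be careful about is the order of multiplication in the semidirect product and the use of commutativity in $A^\times$ to collapse $yuy^{-1}$ to $u$; once those are handled correctly, the result drops out.
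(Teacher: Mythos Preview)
Your argument is correct and matches the paper's approach essentially line for line: the paper also reduces to conjugacy in $G$ via Proposition~\ref{prop:key} and then computes $(x,y)(a,u)(x,y)^{-1}=(ya+(1-u)x,u)$ directly. The only cosmetic difference is that the paper suppresses the intermediate observation that $yuy^{-1}=u$ by commutativity of $A^\times$, writing the final form immediately.
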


By Proposition~\ref{prop:key}, Proposition~\ref{prop:dihedral} is equivalent to the following.

\begin{prop}\label{prop:holomorph}
For all $u\in A^\times$, define the group 
\begin{equation}\label{eq:group}
    A_u:=(1-u)A\leq A.
\end{equation}
    Then two elements $(a,u),(b,v)\in G$ are conjugate if and only if \textup{(}i\textup{)} $u=v$ and \textup{(}ii\textup{)} $a$ and $b$ lie in the same orbit under the action of $A^\times$ on $A/A_u$.
\end{prop}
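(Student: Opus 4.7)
The plan is to reduce the proposition to a direct calculation of the conjugation action in the semidirect product $G = A \rtimes A^\times$, and then interpret the resulting formula as an orbit condition.

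First, I would record the group law: the multiplication in $G$ is $(a,u)(b,v) = (a + ub, uv)$, and the inverse of $(x,y) \in G$ is $(x,y)^{-1} = (-y^{-1}x,\, y^{-1})$. This comes directly from the definition of the holomorph, using that $A^\times$ acts on $A$ by multiplication.

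Next, I would compute the conjugate $(x,y)(a,u)(x,y)^{-1}$ explicitly. Multiplying out and using that $A^\times$ is abelian (so $yuy^{-1} = u$), the computation yields
\[
(x,y)(a,u)(x,y)^{-1} = \bigl(ya + (1-u)x,\ u\bigr).
\]
This is the heart of the argument; the rest is bookkeeping.

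From this formula, the second coordinate is preserved under conjugation, forcing $u = v$ whenever $(a,u)$ and $(b,v)$ are conjugate. Assuming $u=v$, the set of first coordinates of conjugates of $(a,u)$ is exactly $\{ya + (1-u)x : x \in A,\ y \in A^\times\}$. The element $b$ belongs to this set if and only if $b - ya \in (1-u)A = A_u$ for some $y \in A^\times$, i.e.\ if and only if $[b] = y\cdot[a]$ in $A/A_u$ for some $y \in A^\times$. I would also note that the $A^\times$-action on $A/A_u$ is well defined because $y \cdot A_u = y(1-u)A = (1-u)(yA) \subseteq A_u$, using commutativity of $A$.

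The main obstacle is just keeping the conjugation calculation clean; there is no real conceptual difficulty once the semidirect product multiplication is unpacked, and the orbit reformulation is immediate from the explicit formula above.
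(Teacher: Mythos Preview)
Your proposal is correct and follows exactly the same route as the paper: compute $(x,y)(a,u)(x,y)^{-1} = (ya + (1-u)x,\, u)$ in the semidirect product and read off the orbit description. The paper is actually terser than you are---it states the conjugation formula and says ``the claim follows immediately''---so your added remarks on well-definedness of the $A^\times$-action on $A/A_u$ are a bonus rather than a gap.
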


\begin{proof}
    Conjugation in $G$ is given by 
    \[
    (x,y)(a,u)(x,y)\inv=(x+ya,yu)(-y\inv x,y\inv)=(ya+(1-u)x,u).
    \]
    The claim follows immediately from this computation.
\end{proof}

\subsection{Enumeration}

Finally, we prove Theorem~\ref{thm:main}. By Equation~\eqref{eq:count}, Theorem~\ref{thm:main} is equivalent to the following.

\begin{thm}\label{thm:main2}
    Let $\phi$ denote Euler's totient function. 
    If $A=\Z/n\Z$ and ${G=A\rtimes A^\times}$, then
    \[
    k(G)=
    \phi(n)\sum_{d\mid n}\frac{1}{\phi(d)}.
    \]
\end{thm}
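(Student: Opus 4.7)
By Proposition~\ref{prop:holomorph}, conjugacy classes in $G$ are indexed by pairs consisting of an element $u\in A^\times$ and an orbit of the $A^\times$-action on the quotient $A/A_u$, where $A_u=(1-u)A$. Thus I will write
\[
k(G)=\sum_{u\in A^\times} N_u,\qquad N_u:=\bigl|\,(A/A_u)\big/A^\times\bigr|,
\]
and compute each $N_u$ in closed form.

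The first main step is to identify $A/A_u$ and the action of $A^\times$ on it. Setting $d:=\gcd(n,1-u)$, the image $(1-u)A$ is the cyclic subgroup $dA\leq A$, so $A/A_u\cong \Z/d\Z$, and the action of $A^\times$ by multiplication descends through the reduction map $(\Z/n\Z)^\times\twoheadrightarrow(\Z/d\Z)^\times$ (which is surjective because $d\mid n$). Hence the orbits of $A^\times$ on $A/A_u$ are exactly the orbits of $(\Z/d\Z)^\times$ acting by multiplication on $\Z/d\Z$, and these are parameterized by the divisors of $d$ via the map $x\mapsto \gcd(x,d)$. Consequently $N_u=\tau(d)=\tau(\gcd(n,1-u))$, where $\tau$ is the divisor-counting function.

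The second step is a divisor-swap. Using $\tau(m)=\sum_{d\mid m}1$,
\[
k(G)=\sum_{u\in A^\times}\tau(\gcd(n,1-u))=\sum_{u\in A^\times}\sum_{\substack{d\mid n\\ d\mid 1-u}}1=\sum_{d\mid n}\bigl|\{u\in A^\times:u\equiv 1\pmod d\}\bigr|.
\]
For each divisor $d$ of $n$, the surjectivity of the reduction map $(\Z/n\Z)^\times\twoheadrightarrow(\Z/d\Z)^\times$ shows that the fiber over $1\in(\Z/d\Z)^\times$ has size $\phi(n)/\phi(d)$, giving
\[
k(G)=\sum_{d\mid n}\frac{\phi(n)}{\phi(d)}=\phi(n)\sum_{d\mid n}\frac{1}{\phi(d)},
\]
as required.

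The one step that deserves care, and which I expect to be the main obstacle in a careful write-up, is the orbit count $N_u=\tau(d)$: it requires both that the $A^\times$-action on $\Z/d\Z$ really does factor through $(\Z/d\Z)^\times$ (needing surjectivity of the reduction map, not just well-definedness) and the elementary orbit classification by $\gcd$. Everything else is bookkeeping with the standard double-counting identity for $\tau$.
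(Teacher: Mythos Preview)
Your proof is correct and follows essentially the same route as the paper's: reduce to counting $A^\times$-orbits on $A/A_u$ via Proposition~\ref{prop:holomorph}, identify $A/A_u\cong\Z/d\Z$ with $d=\gcd(n,1-u)$, use surjectivity of $(\Z/n\Z)^\times\twoheadrightarrow(\Z/d\Z)^\times$ to get $N_u=\tau(d)$, then swap the order of summation and evaluate the inner count as $\phi(n)/\phi(d)$. The only cosmetic difference is that the paper cites a reference for the orbit count $\tau(d)$, whereas you sketch the $\gcd$-parametrization directly.
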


\begin{proof}
    For all elements $u\in A^\times$, define the group $A_u$ as in Equation~\eqref{eq:group}, and let $\psi(u)$ denote the number of orbits of the action of $A^\times$ on $A/A_u$. 
    By Proposition~\ref{prop:holomorph},
    \[
    k(G)=\sum_{u\in A^\times}\psi(u).
    \]
We compute the right-hand side. 
For each element $u\in A^\times$, consider the integer \[
    d_u:=\gcd(n,1-u).
    \]
    By construction,
    \begin{equation}\label{eq:cyclic}
    A/A_u\cong\Z/d_u\Z.
    \end{equation}
    Let $\tau(d_u)$ be the number of divisors of $d_u$. 
    By Equation \eqref{eq:cyclic} and a standard result of group theory~\cite[Lemma 1.3]{kohl}, the number of orbits of the action of $(A/A_u)^\times$ on $A/A_u$ equals $\tau(d_u)$. Since $A^\times$ projects onto $(A/A_u)^\times$, it follows that $\psi(u)=\tau(d_u)$. Therefore,
\begin{equation}\label{eq:ker}
    k(G)=\sum_{u\in A^\times}\tau(d_u)=\sum_{u\in A^\times} \sum_{d\mid d_u}1= \sum_{d\mid n} \sum_{\substack{u\in A^\times\\
    d\mid d_u}}1,
\end{equation}
where in the last equality we have used the fact that the sums are finite. 

Note that if $d\mid n$ and $u\in A^\times$, then $d\mid d_u$ if and only if $u\equiv 1$ (mod $d$); the latter condition holds if and only if $u$ lies in the kernel of the projection $\pi_d:A^\times\surj(\Z/d\Z)^\times$. 
In other words, the rightmost sum in Equation~\eqref{eq:ker} equals the cardinality of $\ker\pi_d$. 
By the first isomorphism theorem, this cardinality equals
\[
|\ker\pi_d|=\frac{|A^\times|}{|(\Z/d\Z)^\times|}=\frac{\phi(n)}{\phi(d)}.
\]
Hence,
\[
k(G)=\sum_{d\mid n} |\ker\pi_d|=\sum_{d\mid n}\frac{\varphi(n)}{\varphi(d)}.
\]
\end{proof}

\section{Acknowledgments}
I first learned about quandles and GL-racks at the UnKnot V conference in 2024; I thank Jose Ceniceros and Peyton Wood for introducing me to the theory. I also thank Sam Raskin for advising a previous paper that helped inspire this note.

\bigskip
\hrule
\bigskip

\noindent 2020 {\it Mathematics Subject Classification}:
Primary 20N02; Secondary 20B25, 20E45, 57K12.

\noindent \emph{Keywords}: Classification, conjugacy class, dihedral quandle, enumeration, rack, quandle, virtual quandle.

\bigskip
\hrule
\bigskip

\noindent (Concerned with sequences
\seqnum{A000041},
\seqnum{A069208},
\seqnum{A084920},
\seqnum{A385040}, and
\seqnum{A385041}.)

\bigskip
\hrule
\bigskip

\vspace*{+.1in}
\noindent
Received June 17 2025.

\bigskip
\hrule
\bigskip

\noindent
Return to
\href{https://cs.uwaterloo.ca/journals/JIS/}{Journal of Integer Sequences home page}.
\vskip .1in

\end{document}